\newtheorem*{thm}{Theorem}
\newtheorem*{corollary}{Corollary}
\newtheorem*{remark}{Remark}
\def\char{\rm{char}\,}
\author{Fedor Petrov}
\thanks{The work is supported by RFBR grants 11-01-00677-a, 13-01-00935-a, 13-01-12422-ofi-m and
President of Russia grant MK-6133.2013.1.}
\email{fedyapetrov@gmail.com}
\address{
St. Petersburg   Department   of   V.~A.~Steklov    Institute   of   Mathematics
RAS, St. Petersburg State University, Yaroslavl State Univeristy.}
\title{Combinatorial Nullstellensatz Approach to Polynomial Expanding}
\begin{document}

\maketitle

Let $\mathbb{F}$  be a field and $f(x,y)\in \mathbb{F}[x,y]$ be a polynomial of two variables.
For non-empty sets $A,B\subset \mathbb{F}$ denote
$$
f(A,B)=\{f(x,y):x\in A, y\in B\}.
$$
There are numerous works concerning estimates of $|f(A,B)|$
in terms of $|A|$ and $|B|$ for various polynomials $f$. Probably, the first
result in this area is Cauchy-Davenport theorem, stating that for $f(x,y)=x+y$
and $\mathbb{F}=\mathbb{F}_p$ for prime $p$ one has $|f(A,B)|\geq \min(|A|+|B|-1,p)$.
Combinatorial Nullstellensatz of Alon \cite{CN} is one of most flexible ways
to prove Cauchy-Davenport theorem. In particular, it easily generalizes to
restricted sumsets estimates like Erdos-Heilbronn conjecture (unlike purely
combinatorial methods).

There are many asymptotical results for other polynomial $f$. Say, Bourgain \cite{B}
proved that for $f(x,y)=x^2+xy$,
given $\alpha\in (0,1)$
there exists $\beta>\alpha$ such that ford $\mathbb{F}=\mathbb{F}_p$ (here $p$ is large enough prime), and
$|A|,|B|\geq p^{\alpha}$ one has $|f(A,B)|\geq p^{\beta}$. This phenomenon (the estimate is asymptotically much better
then in Cauchy-Davenport case) is called polynomial expanding. It is intimately connected to sum-products
estimates and was intensively studied in recent papers, some of which are mentioned in the bibliography.
The main methods are spectral graph theory
and Fourier analysis. Tao in recent paper \cite{TAO} uses also some algebraic geometry.

The aim of this paper is to give a proof of some weak (of Cauchy-Davenport type) estimate for the Bourgain-type
expanders $g(x)+yh(x)$. The possible advantage of this result is that estimates are very explicit
(without implicit asymptotical constants) and say something for all fields.

Our proof is in spirit of Combinatorial Nullstellensatz. However, we do not use it as a blackbox, but
involve the proof.

\begin{thm}
Let $\mathbb{F}$ be a field, $g(x),h(x)$ be polynomials in $\mathbb{F}[x]$,
$A$ and $B$ be non-empty finite subsets of a $\mathbb{F}$, $|A|=a$, $|B|=b$.
Assume also that $d=\deg g(x)>\deg h(x)$ and $A$ does not contain roots of $h(x)$.
Assume further that $k\leq (a-1)/d+b-1$ and binomial coefficient
$\binom{k}{b-1}$ does not vanish in $\mathbb{F}$.
Then
$$
\left|\{g(x)+yh(x):x\in A,y\in B\}\right|>k.
$$
\end{thm}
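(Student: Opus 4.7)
The plan is to argue by contradiction in the spirit of (but without explicitly invoking) the Combinatorial Nullstellensatz. Suppose the image set $S_0=\{g(x)+yh(x):x\in A,\ y\in B\}$ has cardinality at most $k$. Enlarge $S_0$ (with multiplicity if necessary) to a multiset $S$ of size exactly $k$ and define
$$
P(x,y) = \prod_{s\in S}\bigl(g(x)+y\,h(x)-s\bigr)\in\mathbb{F}[x,y],
$$
which vanishes at every point of $A\times B$ by construction. The aim is to exhibit a nonzero univariate polynomial of degree at most $a-1$ that vanishes on the $a$-element set $A$, producing the desired contradiction.

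View $P$ as an element of $\mathbb{F}[x][y]$ and divide by $\prod_{\beta\in B}(y-\beta)$, obtaining a remainder $P^*(x,y)$ of $y$-degree at most $b-1$ that agrees with $P$ at every $y\in B$ and in particular still vanishes on $A\times B$. For each fixed $\alpha\in A$, the one-variable polynomial $P^*(\alpha,y)$ has degree at most $b-1$ yet vanishes at $b$ distinct points, so it is the zero polynomial. Therefore every coefficient $P^*_j(x):=[y^j]P^*$ vanishes on $A$, and I will focus on $j=b-1$.

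Writing out
$$
P(x,y) = \sum_{j=0}^{k} h(x)^j\,e_{k-j}\bigl(g(x)-s_1,\ldots,g(x)-s_k\bigr)\,y^j,
$$
(with $e_r$ the $r$-th elementary symmetric polynomial) and noting that the reduction sends $y^j$ to $\lambda_j y^{b-1}+(\text{lower powers})$ with $\lambda_j=0$ for $j<b-1$ and $\lambda_{b-1}=1$, one gets
$$
P^*_{b-1}(x) = h(x)^{b-1}\,\phi(x),\quad \phi(x):=\sum_{i=0}^{k-b+1}\lambda_{i+b-1}\,h(x)^{i}\,e_{k-b+1-i}\bigl(g(x)-s_1,\ldots,g(x)-s_k\bigr).
$$
Since $A$ contains no root of $h$, the scalar $h(\alpha)^{b-1}$ is nonzero for every $\alpha\in A$, so the vanishing of $P^*_{b-1}$ on $A$ forces $\phi$ to vanish on $A$ as well. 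This is the step where the hypothesis on $A$ is essential.

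The last step is to compute the degree and leading coefficient of $\phi$. Because $\deg h<d$, the $x$-degree of the $i$-th summand equals $i\deg h+(k-b+1-i)d$ and is maximized at $i=0$, giving $\deg\phi\le d(k-b+1)\le a-1$ by the assumption $k\le(a-1)/d+b-1$. Only the $i=0$ term contributes to $x^{d(k-b+1)}$; since $e_{k-b+1}\bigl(g(x)-s_1,\ldots,g(x)-s_k\bigr)$ viewed as a polynomial in $g(x)$ has leading coefficient $\binom{k}{k-b+1}=\binom{k}{b-1}$, the leading $x$-coefficient of $\phi$ is $\binom{k}{b-1}(\operatorname{lc} g)^{k-b+1}$, nonzero by hypothesis. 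Hence $\phi$ is a nonzero polynomial of degree at most $a-1$ vanishing on a set of size $a$, contradiction. The main obstacle I anticipate is the bookkeeping in this last paragraph: recognising that the reduced $[y^{b-1}]$-coefficient factors out $h(x)^{b-1}$ cleanly, and correctly identifying $\binom{k}{b-1}$ as the top coefficient of $e_{k-b+1}$ of $k$ arguments when expanded as a polynomial in $g(x)$.
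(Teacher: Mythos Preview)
Your proof is correct and follows essentially the same route as the paper's: both set up $P(x,y)=\prod_s(g(x)+yh(x)-s)$, isolate the contribution at $y$-degree $b-1$, factor out $h(x)^{b-1}$, and identify the top $x$-coefficient as $\binom{k}{b-1}(\operatorname{lc} g)^{k-b+1}$. The only cosmetic difference is that the paper picks out the relevant coefficient via dual Vandermonde weights $\alpha(x),\beta(y)$ with $\sum_y\beta(y)y^j=[j=b-1]$ and $\sum_x\alpha(x)h(x)^{b-1}x^\nu=[\nu=d(k-b+1)]$, whereas you achieve the same extraction by polynomial division in $y$ followed by a direct degree count in $x$; these two devices are equivalent linear functionals.
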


The theorem immediately yields the following
\begin{corollary}
Let $p=\char \mathbb{F}$ (and $p=\infty$ if $\char \mathbb{F}=0$).
Then
$$
\left|\{g(x)+yh(x):x\in A,y\in B\}\right|\geq \min(a/d+b-1,p).
$$
In particular, for Bourgain's expander we get $|\{x^2+xy:x\in A,y\in B\}|\geq \min(a/2+b-1,p)$
provided that $0\notin A$. 
\end{corollary}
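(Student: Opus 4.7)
The plan is to argue by contradiction. Suppose $\left|\{g(x)+yh(x):x\in A,y\in B\}\right|\le k$, enumerate this set (with repetitions if needed to reach exactly $k$ entries) as $s_1,\ldots,s_k$, and consider
$$Q(x,y):=\prod_{i=1}^{k}\bigl(g(x)+yh(x)-s_i\bigr),$$
which vanishes on every point of $A\times B$. Expanding $Q$ in powers of $y$ gives coefficients $c_j(x)=h(x)^{j}\,e_{k-j}\bigl(g(x)-s_1,\ldots,g(x)-s_k\bigr)$, where $e_\ell$ denotes the $\ell$-th elementary symmetric polynomial. The goal is to distil from the identity $Q\equiv 0$ on $A\times B$ a nonzero univariate polynomial that vanishes on $A$ and whose degree is too small; this will produce the contradiction.

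The bridging step is a one-variable Lagrange summation in $y$. For each $x_0\in A$ the polynomial $Q(x_0,y)$ vanishes on $B$, whence $\sum_{y_0\in B}Q(x_0,y_0)/\phi_B'(y_0)=0$, where $\phi_B(y)=\prod_{y_0\in B}(y-y_0)$. Applying the classical identity $\sum_{y_0\in B}y_0^{j}/\phi_B'(y_0)=h_{j-b+1}(B)$ (complete homogeneous symmetric function, taken to be $0$ when the subscript is negative), the sum collapses to
$$C(x_0):=\sum_{j=b-1}^{k}c_j(x_0)\,h_{j-b+1}(B)=0\qquad\text{for every }x_0\in A,$$
and hence the polynomial $C(x):=\sum_{j=b-1}^{k}c_j(x)\,h_{j-b+1}(B)$ vanishes identically on $A$.

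Now $h(x)^{b-1}$ divides every term of $C(x)$ (since $j\ge b-1$), and reindexing by $i=j-b+1$ produces $C(x)=h(x)^{b-1}D(x)$ with
$$D(x)=\sum_{i=0}^{r}h(x)^{i}\,e_{r-i}\bigl(g(x)-s_1,\ldots,g(x)-s_k\bigr)\,h_i(B),\qquad r:=k-b+1.$$
Since $h$ has no root in $A$, $D$ also vanishes on $A$. The $x$-degree of the $i$-th summand is $i\deg h+(r-i)d$; because $d>\deg h$, this is strictly decreasing in $i$, so the unique top-degree contribution comes from $i=0$, giving $\deg D=rd\le a-1$ by the hypothesis $k\le(a-1)/d+b-1$. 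The leading coefficient of $D$ therefore equals that of $e_r\bigl(g(x)-s_1,\ldots,g(x)-s_k\bigr)$: since every $g(x)-s_i$ has the same leading term $c\,x^{d}$ (with $c$ the leading coefficient of $g$), this is $\binom{k}{r}c^{r}=\binom{k}{b-1}c^{r}$, which is nonzero in $\mathbb{F}$ by hypothesis. Hence $D$ is a nonzero polynomial of degree at most $a-1$ vanishing on the $a$-element set $A$, which is impossible.

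The main obstacle is identifying the correct univariate polynomial $C(x)$ to extract; once it is written down, the two numerical hypotheses on $k$ serve complementary purposes---one bounds $\deg D$, the other guarantees that its leading coefficient survives in $\mathbb{F}$---while $\deg g>\deg h$ is exactly the condition that makes the $i=0$ term dominate, and the non-vanishing of $h$ on $A$ is used precisely to cancel the factor $h(x)^{b-1}$.
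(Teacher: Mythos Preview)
Your argument is correct and is essentially the same mechanism as the paper's proof of the Theorem: your Lagrange weights $1/\phi_B'(y_0)$ are precisely the paper's function $\beta$ (both satisfy $\sum_{y\in B}\beta(y)y^j=[j=b-1]$ for $0\le j\le b-1$), and your observation that $D(x)$ is a nonzero polynomial of degree $\le a-1$ vanishing on $A$ is equivalent to the paper's choice of a dual function $\alpha$ extracting the top coefficient of that same polynomial. The use of $e_{k-j}$ and $h_{j-b+1}$ makes the bookkeeping more explicit, but the identified leading term $\binom{k}{b-1}c^{\,k-b+1}$ is identical to the paper's.

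However, what you have written is a proof of the \emph{Theorem}, not of the Corollary: you work under the hypotheses $k\le (a-1)/d+b-1$ and $\binom{k}{b-1}\ne 0$ in $\mathbb{F}$, but never explain how to choose $k$ so as to obtain the bound $\min(a/d+b-1,p)$. The paper does not spell this out either (it says the Theorem ``immediately yields'' the Corollary), but for completeness you should add the one-line deduction: if $p\le a/d+b-1$ take $k=p-1$, so that $\binom{p-1}{b-1}\equiv(-1)^{b-1}\not\equiv 0\pmod p$ and $k\le (a-1)/d+b-1$; otherwise take $k=\lfloor(a-1)/d\rfloor+b-1<p$, so that $\binom{k}{b-1}$, being a product of integers in $\{1,\dots,p-1\}$ divided by such a product, is a $p$-adic unit. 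In either case the Theorem gives $|f(A,B)|>k$, which is the asserted bound.
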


\begin{proof}[Proof of the theorem]
Assume the contrary. Then there exists a set $C$ of cardinality $k$ so that
$g(x)+yh(x)\in C$ for all $x\in A$, $y\in B$. Clearly $k\geq b$ (just fix $x$ and vary $y$).
Denote
$$
P(x,y):=\prod_{c\in C} (g(x)+yh(x)-c)=\sum_{i,j} \lambda_{i,j} g(x)^ih(x)^jy^j
$$
for some pairs $(i,j)$ of non-negative integers and some coefficients $\lambda_{i,j}$
in $\mathbb{F}$. Such a polynomial $P(x,y)$ vanishes on $A\times B$.
Consider some $\mathbb{F}$-valued functions $\alpha(x),\beta(y)$ defined on $A$ and $B$
respectively. Look at the following sum, which eventually vanishes
\begin{multline}
\label{main}
\sum_{x\in A,y\in B} \alpha(x)\beta(y) P(x,y)=\sum_{i,j}\lambda_{i,j}
\sum_{x\in A,y\in B} \alpha(x)\beta(y) g(x)^ih(x)^jy^j=\\
\sum_{i,j}\lambda_{i,j} \left(\sum_{x\in A} \alpha(x) g(x)^ih(x)^j \right)\left(\sum_{y\in B} \beta(y) y^j
\right).
\end{multline}
Our goal is to choose functions $\alpha,\beta$ so that there exists unique non-zero term in the
last expression in (\ref{main}).
Let's choose $\beta$ so that
$$
\sum_{y\in B} \beta(y) y^j =
\begin{cases} 0 &\mbox{if } 0\leq j\leq b-2\\
1& \mbox{if } j=b-1. \end{cases}
$$
Such $\beta$ does exist, since the Vandermonde determinant for the set $B$
does not vanish.
Then all terms in (\ref{main}) with $j<b-1$ do vanish.
If $j\geq b-1$, then we may expand
$$
g(x)^ih(x)^j=h(x)^{b-1} \sum_{\nu=0}^{d(k-b+1)} \eta_{i,j}(\nu) x^{\nu}.
$$
Let's choose
$\alpha$ so that
$$
\sum_{x\in A} \alpha(x)h(x)^{b-1} x^i =
\begin{cases} 0 &\mbox{if } 0\leq i< d(k-b+1)\\
1& \mbox{if } i=d(k-b+1). \end{cases}
$$
Since $d(k-b+1)\leq a-1$, this is a (part of) Vandermonde system again
(for unknowns $\alpha(x)\cdot h(x)^{b-1}$), and therefore has a solution.
For this choice of $\alpha$ all summands
$$
\sum_{x\in A} \alpha(x)h(x)^{b-1} \eta_{i,j}(\nu) x^{\nu}
$$
corresponding to fixed $i,j$ and fixed $\nu<d(k-b+1)$ vanish.
Now note that $\eta_(i,j)(d(k-b+1))=0$ unless $j=b-1$, $i=k-b+1$
(here we use that $\deg h(x)<d$). And if $j=b-1$, $i=k-b+1$, we have
$$
\eta_{k-b+1,b-1}\left(d(k-b+1)\right)=\binom{k}{b-1}M^{k-b+1},
$$
where $M$ is leading coefficient of the polynomial $g(x)$.
So, by our assumption this expression does not vansih in $\mathbb{F}$. After all, we indeed
have unique non vanishing term in (\ref{main}), as desired.
\end{proof}

\begin{remark} Let $\mathbb{F}$ be a field of $p^n$ elements for prime $p$, $B$ be any subfield
of, say, $p^m$ elements, and $A=B\setminus \{0\}$. Then $f(A,B)=B$ for any polynomial $f$ and we get no non-trivial
bound. But already for $|B|=b=p^m+1$ and $|A|=a\geq C\cdot p^m$, $0<C<1$, for, say, $f(x,y)=x^2+xy$, we get an estimate
$|f(A,B)|\geq (1+C/2)p^m-1$, since corresponding binomial coefficient is not divisible by $p$.
It would be interesting to have structured version of this result, i.e. to prove that if $|f(A,B)|$
is close to $|B|$, then $B$ is close to a subfield. Also, the constant $1+C/2$ does not look to be sharp
and probably the correct constant is $1+C$.
\end{remark}

I am grateful to Ilya Shkredov for pending my attention on this question and many fruitful discussions.

\end{document}